\newtheorem{theorem}{Theorem}[section]
\newtheorem{lemma}[theorem]{Lemma}
\theoremstyle{definition}
\newtheorem{definition}[theorem]{Definition}
\newtheorem{remark}[theorem]{Remark}
\newcommand{\R}{\mathbb R}%
\newcommand{\N}{\mathbb N}%
\numberwithin{equation}{section}
\renewcommand\subsubsection{\@secnumfont}{\bfseries}%
\renewcommand\subsubsection{\@startsection{subsubsection}{3}
  \z@{.5\linespacing\@plus.7\linespacing}{-.5em}%
  {\normalfont\bfseries}}
\begin{document}

\title[Restricted Mean Value Property]{Restricted Mean Value Property with non-tangential boundary behavior on Riemannian manifolds}

\author[Utsav Dewan]{Utsav Dewan}
\address{Stat-Math Unit, Indian Statistical Institute, 203 B. T. Rd., Kolkata 700108, India}
\email{utsav\_r@isical.ac.in}

\subjclass[2020]{Primary 31C12; Secondary 31C05} 

\keywords{Restricted Mean Value Property, Harmonic functions, Non-tangential boundary behavior, Harmonic manifolds.}

\begin{abstract} 
A well studied classical problem is the harmonicity of functions satisfying the restricted mean-value property (RMVP) for domains in $\mathbb{R}^n$. Recently, the author along with Biswas investigated the problem in the general setting of Riemannian manifolds and obtained results in terms of unrestricted boundary limits of the function on a full measure subset of the boundary. However in the context of classical Fatou-Littlewood type theorems for the boundary behavior of harmonic functions, a genuine query is to replace the condition on unrestricted boundary limits with the more natural notion of non-tangential boundary limits. The aim of this article is to answer this question in the local setup for pre-compact domains with smooth boundary in Riemannian manifolds and in the global setup for non-positively curved Harmonic manifolds of purely exponential volume growth. This extends a classical result of Fenton for the unit disk in $\mathbb{R}^2$.
\end{abstract} 

\maketitle
\tableofcontents

\section{Introduction}
We recall the following characterization of harmonic functions in Euclidean domains: a continuous function in a domain $\Omega$ in $\R^n$ is harmonic if and only if it satisfies the spherical mean-value property (with respect to the surface volume measure) for all spheres contained in $\Omega$. Then a natural question to ask is if one instead assumes a much weaker version of the mean-value property, does harmonicity still hold true. Namely, a continuous function on a domain $\Omega$ in $\mathbb{R}^n$ is said to satisfy the {\it restricted mean-value property} (RMVP for short) if for each point $x \in \Omega$, there exists a sphere $S_x$ with center $x$ and some radius $\rho(x)$, which is contained in the domain $\Omega$, such that $u(x)$ equals the mean-value of $u$ on the sphere $S_x$. One can then ask whether a function satisfying the restricted mean-value property in $\Omega$ is harmonic in $\Omega$.

\medskip

For a bounded domain $\Omega$, the boundary behavior of the function seems to play an important role in answering such a question. Indeed, a simple argument of Kellogg shows that if the function $u$ extends continuously to the closure of the domain $\Omega$, then $u$ must be harmonic (\cite{Kellogg}). Without any assumptions on the boundary behavior however, there are counter-examples (see page 22, \cite{Littlewood}). These counter-examples are of specific nature: they are unbounded. In this context, Littlewood asked (in \cite{Littlewood}) for $n=2$, whether the unboundedness is the only obstruction, more precisely, if the function satisfying the RMVP is also assumed to be bounded, then is it harmonic? This is the classical `one-circle problem' and in \cite{HN2}, Hansen and Nadirashvili showed that the above problem has a negative answer, that is, there exists a continuous, bounded function on the unit disk in $\R^2$ which satisfies RMVP but is not harmonic. The problem is still open for $n \ge 3$ however. Nevertheless, obtaining sufficient conditions for a function satisfying RMVP to be harmonic has received considerable attention over the years, 
in particular from Fenton and Hansen-Nadirashvili (\cite{Fenton,Fenton2,Fenton3,HN1}).

\medskip

While the above articles addressed this problem for domains in $\mathbb{R}^n$, the author along with Biswas has recently investigated the problem in the general setting of Riemannian manifolds \cite{BD}. Now it is not true in general that harmonic functions on Riemannian manifolds satisfy the mean-value property when the mean-values are taken over geodesic spheres with respect to the surface volume measure induced by the Riemannian metric. However the mean-value property does hold true when one considers the mean-values with respect to the harmonic measures instead. Thus the results in \cite{BD} require a reformulation of the RMVP in terms of spherical mean-values over geodesic spheres with respect to harmonic measures, which we restate here for the sake of completeness:
\begin{definition}\cite[definition 1.1]{BD} \label{defn_rmvp}
Let $\Omega$ be a domain in a Riemannian manifold $M$. A continuous function $u$ on $\Omega$ is said to satisfy the {\it Restricted Mean Value Property} in $\Omega$ if for all $z \in \Omega$, there exists $0<\rho(z)<inj(z)$ (where $inj(z)$ is the injectivity radius of $z$) such that the closed ball $\overline{B(z,\rho(z))}$ is contained in $\Omega$, and one has the equality,
\begin{equation*} 
u(z) = \int_{\partial B(z,\rho(z))} u(\xi) \:d\mu_{z,B(z,\rho(z))}(\xi) \:,
\end{equation*}
where $\mu_{z,B(z,\rho(z))}$ is the harmonic measure on $\partial B(z,\rho(z))$ with respect to $z$. 
\end{definition}

The main results (Theorems $1.2$ and $1.4$) in \cite{BD} are remarkable generalizations of a classical result of Fenton (for the unit disk in $\R^2$) \cite[Theorem 2]{Fenton} with the general outline: if a continuous, bounded, real-valued function satisfying the RMVP has unrestricted boundary limits on a full measure subset of the boundary, then it is harmonic. Now in the classical setting of the unit disk $\mathbb D$ in $\R^2$, the celebrated result of Fatou \cite{Fatou} asserts that bounded harmonic functions in $\mathbb D$ have non-tangential limits almost everywhere on the unit circle. In fact, the non-tangential approach regions were shown to be sharp by Littlewood in \cite{Littlewood1}, where he constructed a bounded harmonic function in $\mathbb D$, that fails to have tangential limits almost everywhere on the unit circle. This result was further improved by Aikawa in \cite{Aikawa}, where he constructed a bounded harmonic function in $\mathbb D$, that fails to have tangential limits at all points on the unit circle. Then in the context of the above results on the boundary behavior of harmonic functions, it becomes a genuine query to replace the condition on unrestricted boundary limits (for the results in \cite{BD}) with the more natural notion of non-tangential boundary limits. The aim of this article is to address this question. 

\medskip

The only known result in this direction is for the unit disk $\mathbb D$ in $\R^2$, and is due to Fenton \cite[Theorem 3]{Fenton}: let $\phi$ be such that $0<\phi<\pi/2$, and let $u$ be a bounded, continuous, real-valued function which is such that 
\begin{equation*}
\displaystyle\lim_{z \to e^{i \theta}} u(z) \text{ exists for almost all } \theta\:,
\end{equation*}
whenever $z$ approaches to $e^{i \theta}$ through a sector of opening $2\phi$ with vertex $e^{i \theta}$, symmetrically disposed about the normal to the unit circle at $e^{i \theta}$. If $u$ has the RMVP on circles of radius
\begin{equation*}
\rho(\zeta) \le (1-|\zeta|) \tan (\phi/2)\:,
\end{equation*}
then $u$ is harmonic in $\mathbb D$.

\medskip

We note that in the above result, the function is assumed to have boundary limits only through a non-tangential cone of fixed apperture (with vertices belonging to a full measure subset of the boundary). This is a much weaker hypothesis than the existence of non-tangential limits which assumes the above to be true for any non-tangential cone.

\medskip

We first see a generalization of the above result for domains in a general Riemannian manifold. Henceforth $M$ will denote a Riemannian manifold, and $\Omega \subset M$ will be a precompact domain in $M$ with smooth boundary, such that the distance to the boundary $d(z,\partial \Omega)$ is smaller than the injectivity radius $inj(z)$, for all points $z$ in $\Omega$. Following \cite{Kenig}, next we define analogues of non-tangential cones in this general setup.

\begin{definition} \label{non-tangential_cone_domain}
Let $\Omega$ be a smooth, pre-compact domain in a Riemannian manifold $M$. For $\xi \in \partial \Omega$ and $\alpha \in (0,\infty)$, the non-tangential cones are defined as,
\begin{equation*}
\Gamma_\alpha(\xi):=\left\{p \in \Omega: d(\xi,p) \le (1+\alpha)d(p,\partial \Omega)\right\} \:.
\end{equation*}
\end{definition}

We note that by the smoothness of the boundary $\partial \Omega$, given $\alpha \in (0,\infty)$ and $\xi \in \partial \Omega$, there exists $\varepsilon>0$, such that 
\begin{equation*}
\overline{\Gamma_\alpha(\xi)} \cap B(\xi,\varepsilon) \cap \partial \Omega = \{\xi\}\:.
\end{equation*}

\medskip

In this setting we have the following result: 
\begin{theorem} \label{domain_nt}
For a fixed $\alpha \in (0,\infty)$, we fix a constant $\kappa \in \left(0,\frac{\alpha}{4+\alpha}\right)$. If $u: \Omega \subset M \to \R$ is bounded, continuous, satisfies the Restricted Mean Value Property at each point $z \in \Omega$ on a geodesic sphere of radius $\rho(z) \le \kappa d(z,\partial \Omega)$,  and 
\begin{equation*}  
\displaystyle\lim_{z \to \xi} u(z) = u_\xi \:\text{exists}  \:,
\end{equation*}
whenever $z$ approaches to $\xi$ through the non-tangential cone $\Gamma_\alpha(\xi)$, for almost every $\xi \in \partial \Omega$ with respect to the Riemannian measure on $\partial \Omega$, then  $u$ is harmonic in $\Omega$.
\end{theorem}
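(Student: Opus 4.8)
The plan is to prove harmonicity by comparison with the solution of the Dirichlet problem, reducing everything to a statement about a bounded martingale attached to the RMVP. First I would let $h$ denote the bounded harmonic function on $\Omega$ solving the Dirichlet problem with boundary data $\xi \mapsto u_\xi$, the a.e.-defined bounded function furnished by the non-tangential limits. Since a genuinely harmonic function satisfies the mean value identity with respect to harmonic measure on every geodesic ball — this is precisely the reason Definition \ref{defn_rmvp} is phrased through harmonic measures — the difference $v := u - h$ again satisfies the RMVP with the same radii $\rho(z) \le \kappa\, d(z,\partial\Omega)$, is bounded, and, by the classical Fatou theorem for bounded harmonic functions on a smooth precompact domain (so that $h$ attains the data $u_\xi$ non-tangentially a.e.), has non-tangential limit $0$ through $\Gamma_\alpha(\xi)$ for almost every $\xi \in \partial\Omega$. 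It then suffices to show $v \equiv 0$.

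To do this I would attach to the RMVP the Markov chain $\{X_n\}$ on $\Omega$ that, from $X_n = z$, jumps to a point distributed according to the harmonic measure $\mu_{z,B(z,\rho(z))}$ on $\partial B(z,\rho(z))$. The RMVP identity says exactly that $\{v(X_n)\}$ is a martingale for the natural filtration, and since $v$ is bounded it is a bounded martingale; by the martingale convergence theorem $v(X_n) \to V_\infty$ almost surely, with $v(z) = \E_z[V_\infty]$ for every starting point $z$. Next I would record (as in \cite{BD}, using the precompactness of $\Omega$ together with the bound $\rho(z)\le \kappa\, d(z,\partial\Omega)$) that the chain converges almost surely to a boundary point $X_\infty \in \partial\Omega$, and that its exit distribution $\omega^z$ is absolutely continuous with respect to the Riemannian surface measure on $\partial\Omega$; the latter guarantees that $X_\infty$ almost surely lands in the full-measure set of boundary points at which $v$ has non-tangential limit $0$.

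The heart of the matter — and the one place where the hypothesis $\kappa < \frac{\alpha}{4+\alpha}$ is used — is to show that, almost surely, the chain approaches $X_\infty$ \emph{through the fixed cone} $\Gamma_\alpha(X_\infty)$, i.e. that $X_n \in \Gamma_\alpha(X_\infty)$ for infinitely many $n$ along the convergent subsequence. This cannot hold for an arbitrary deterministic path whose steps are merely bounded by $\kappa\, d(\cdot,\partial\Omega)$ — such paths can spiral into the boundary tangentially with summable step lengths — so the argument must exploit the isotropy of the harmonic-measure jumps, which prevents a systematic tangential drift and forces the walk to enter the cone about its limit infinitely often, exactly as the underlying diffusion does. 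The quantitative role of $\kappa$ is geometric: the threshold $\frac{\alpha}{4+\alpha}$ is calibrated so that a geodesic ball $B(z,\rho(z))$ of radius $\rho(z)\le \kappa\, d(z,\partial\Omega)$ centred at a point of a suitable sub-cone is entirely contained in $\Gamma_\alpha(\xi)$; comparing $d(\xi,\cdot)$ with $d(\cdot,\partial\Omega)$ before and after a jump shows that the admissible step size for the discrete chain to reproduce the non-tangential approach within the prescribed aperture $\alpha$ is exactly of this order. Granting this, the full sequence $v(X_n)$ converges to $V_\infty$ while a subsequence lies in $\Gamma_\alpha(X_\infty)$ and tends to $X_\infty$; since $v$ has non-tangential limit $0$ there, $V_\infty = 0$ almost surely, whence $v(z) = \E_z[V_\infty] = 0$ for all $z$, i.e. $u = h$ is harmonic.

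I expect the main obstacle to be precisely this non-tangential capture step: transferring the probabilistic non-tangential convergence of the diffusion to the discrete RMVP chain while controlling the aperture by the explicit constant, and doing so on a general Riemannian $\Omega$ where one lacks the conformal symmetry Fenton exploited for the disk. Subsidiary technical points that will need care are the absolute continuity of the exit measure $\omega^z$ with respect to surface measure and the correct form of the Fatou theorem ensuring that $h$ realizes its data non-tangentially almost everywhere; both are available for smooth precompact domains but must be invoked in a way compatible with the harmonic-measure formulation of the RMVP.
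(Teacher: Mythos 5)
Your proposal is a genuinely different (probabilistic) route, but as written it has a gap at exactly the point you yourself identify as the heart of the matter: you never prove that the RMVP chain $\{X_n\}$ enters the cone $\Gamma_\alpha(X_\infty)$ infinitely often almost surely. The appeal to ``isotropy of the harmonic-measure jumps'' is a heuristic, not an argument: the exit point $X_\infty$ is a functional of the entire path, the steps have size comparable to $\kappa\, d(\cdot,\partial\Omega)$, and ruling out a.s.\ tangential approach to the (random) limit point would require a quantitative zero--one law or a coupling with the diffusion, none of which is sketched. Since $V_\infty=0$ is deduced only from this capture property, the conclusion $v\equiv 0$ does not follow from what is actually established. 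There are also three subsidiary inputs you assert but do not secure: (i) the transition kernel $z \mapsto \mu_{z,B(z,\rho(z))}$ is only well defined as a Markov kernel if $\rho$ is measurable, and the RMVP hypothesis gives no regularity of $\rho$ whatsoever; (ii) the absolute continuity of the exit law $\omega^z$ with respect to surface measure is attributed to \cite{BD}, but the machinery there (concentration of harmonic measures, convergence of Poisson integrals) is not an exit-law statement for such chains; (iii) the Fatou theorem guaranteeing that $h=P[u_\xi]$ attains its data non-tangentially a.e.\ on a smooth precompact Riemannian domain is a substantial input that must be cited or proved in this generality.

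For contrast, the paper's proof is deterministic and sidesteps the capture problem entirely. It forms the envelopes $v=\sup_{h\in\mathscr{F}_z}h(z)$ and $w=\inf_{h\in\mathscr{F}_z}h(z)$ of harmonic extensions of $u$ over admissible balls (Lemma \ref{props_of_v}, remark \ref{props_of_w}), so that $w\le u\le v$ with $v$ subharmonic and $w$ superharmonic. The constant $\kappa<\frac{\alpha}{4+\alpha}$ enters through precisely the ball-in-cone geometry you identified: for $z$ in a slightly smaller cone at $\xi$, the specific ball $B(x,r)$ realizing $v(z)$ satisfies $\overline{B(x,r)}\subset\Gamma_\alpha(\xi)$; applying this along a sequence converging \emph{normally} to $\xi$ shows $v$ and $w$ both have normal limit $u_\xi$ at a.e.\ $\xi$. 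Then $\varphi:=v-w\ge 0$ is a bounded continuous subharmonic function with normal limit $0$ a.e., and the $L^\infty$ maximum principle of Lemma \ref{max_princ} (proved via Poisson-kernel asymptotics under tubular perturbation of the domain) gives $\varphi\le P[0]\equiv 0$, hence $u\equiv v\equiv w$ is harmonic. Because the cone containment is checked for explicitly chosen balls rather than for random trajectories, no measurable selection, no exit-law absolute continuity, and no Fatou theorem are needed. If you wish to salvage the probabilistic approach, the capture step is where all the work lies, and it is likely at least as hard as the paper's maximum-principle route.
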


We also have a result for when the boundary is ``at infinity". Namely, if we take $X$ to be a non-positively curved Harmonic manifold of purely exponential volume growth, then it has infinite injectivity radius at each point, and we can consider functions on the whole manifold $X$ satisfying definition \ref{defn_rmvp}, where $X$ itself is the domain under consideration. In this case however, one needs to consider the {\it Gromov boundary} $\partial X$ of the Gromov hyperbolic space $X$, and the appropriate measure class on the Gromov boundary in this context is the class of visibility measures. We refer the reader to section $2$ for the definition of the Gromov boundary and the visibility measures on the Gromov boundary. 

\medskip

Let us briefly recall that a Harmonic manifold is a complete, simply connected Riemannian manifold $X$ such that for any point $x \in X$, there exists a non-constant harmonic function on a punctured neighbourhood of $x$ which is radial around $x$, that is, depends only on the geodesic distance from $x$. By purely exponential volume growth, we mean that there exist constants $C > 1,\: h > 0$, such that the volume of metric balls $B(x, R)$ with center $x \in X$ and radius $R>1$, satisfies the asymptotics:
\begin{equation*}
\frac{1}{C} e^{hR} \le vol(B(x,R)) \le C e^{hR} \:.
\end{equation*}

The class of non-positively curved Harmonic manifolds of purely exponential volume growth includes all the known examples of non-compact non-flat Harmonic manifolds: the rank one Riemannian symmetric spaces of non-compact type and the Damek-Ricci spaces.

\medskip

In order to state our result, we recall the standard notion of non-tangential cones defined in terms of the intrinsic geometry of $X$. 
\begin{definition} \label{non-tangential_cone_full_space}
Fix an origin $o \in X$. For $\xi \in \partial X$, let $\gamma_\xi$ denote the unit-speed geodesic ray such that $\gamma_\xi(0)=o,\:\gamma_\xi(+\infty)=\xi$ and $d(o,\gamma_\xi(t))=t$. For $C>0$, the non-tangential cone, $T_C(\xi)$ is defined by
\begin{equation*}
T_C(\xi):=\{x \in X: d(x,\gamma_\xi) \le C\}\:.
\end{equation*}
\end{definition}

\medskip

We then have:
\begin{theorem} \label{harmonic_manifold_nt}
Fix a constant $C>0$, and let $X$ be a non-positively curved Harmonic manifold of purely exponential volume growth with origin $o$. If $u:X \to \R$ is  bounded, continuous, satisfies the Restricted Mean Value Property at each point $z \in X$ on a geodesic sphere of radius $\rho(z) \le C/2$ and 
\begin{equation*} 
\displaystyle\lim_{z \to \xi} u(z) = u_\xi \:\text{exists} \:,
\end{equation*}
whenever $z$ approaches to $\xi$ through the non-tangential cone $T_C(\xi)$, for almost every $\xi \in \partial X$ with respect to the visibility measure $\lambda_o$ on $\partial X$, then $u$ is harmonic in $X$.
\end{theorem}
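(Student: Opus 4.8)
The plan is to realize the Restricted Mean Value Property probabilistically and reduce harmonicity to a Poisson-type representation, the genuinely new ingredient being a geodesic-tracking estimate that forces the underlying random walk to approach the boundary non-tangentially. First I would fix a starting point $z \in X$ and build the Markov chain $\{Z_n\}_{n\ge 0}$ with $Z_0=z$ whose transition from a point $w$ is the harmonic measure $\mu_{w,B(w,\rho(w))}$ on the geodesic sphere $\partial B(w,\rho(w))$. By Definition \ref{defn_rmvp} and the tower property, $\{u(Z_n)\}$ is a martingale; since $u$ is bounded it converges almost surely and in $L^1$ to a limit $u_\infty$, while each step has length exactly $\rho(Z_n)\le C/2$. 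Next I would record two facts about this chain available from the geometry of $X$ (non-positive curvature together with purely exponential volume growth makes $X$ Gromov hyperbolic, and these facts underlie the unrestricted-limit results of \cite{BD}): the chain converges almost surely to a point $\xi=\xi(\omega)\in\partial X$, and its exit distribution $\omega_z$ is absolutely continuous with respect to the visibility measure $\lambda_o$. The latter guarantees that almost surely $\xi$ lies in the full-$\lambda_o$-measure set on which the non-tangential limit $u_\xi$ exists.

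The crux — and the step I expect to be the main obstacle — is to upgrade ``$Z_n\to\xi$'' to ``$Z_n\in T_C(\xi)$ for all large $n$,'' that is, to show the walk reaches $\xi$ non-tangentially inside the \emph{fixed} tube of radius $C$ about $\gamma_\xi$. This is exactly where $\rho(z)\le C/2$ must be used, since small steps alone (the walk could make long transverse excursions) do not suffice. I would work with the transverse distance $s_n:=d(Z_n,\gamma_\xi)$, controlled via the nearest-point projection onto the convex ray $\gamma_\xi$: a single step satisfies $s_{n+1}\le s_n+\rho(Z_n)\le s_n+C/2$, so the only danger is accumulation. To rule this out I would exhibit a restoring mechanism: the forward bias of the harmonic measure toward $\xi$, combined with negative curvature (the geodesic from an off-axis point to $\xi$ is asymptotic to $\gamma_\xi$, so moving toward $\xi$ decreases $d(\cdot,\gamma_\xi)$), makes $s_n$ behave like a supermartingale while it is large; together with the uniform one-step bound $C/2$ this keeps $s_n$ within $C$ from some random index onward. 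An equivalent route is to use the almost-sure positive escape rate to see that $\{Z_n\}$ is a quasi-geodesic ray toward $\xi$, so hyperbolic fellow-traveling places it in a bounded neighbourhood of $\gamma_\xi$; the delicate part is the bookkeeping of constants so that the resulting tube has radius exactly $C$, the split $C/2+C/2$ (one-step displacement plus convergence of asymptotic rays) accounting for the hypothesis $\rho\le C/2$.

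Granting the tracking estimate, the argument closes quickly. Almost surely $\xi$ lies in the good set and $Z_n\to\xi$ through $T_C(\xi)$, so the non-tangential hypothesis gives $u(Z_n)\to u_\xi$; matching this with the martingale limit yields $u_\infty=u_\xi$ almost surely. Taking expectations in $u(z)=\E\,[u(Z_0)]=\E\,[u_\infty]$ gives $u(z)=\int_{\partial X}u_\xi\,d\omega_z(\xi)$. Since $\omega_z$ is the harmonic (hitting) measure and the RMVP walk is Brownian motion sampled at sphere-hitting times, the right-hand side is the Poisson integral of the boundary data $\{u_\xi\}$ and is therefore harmonic in $z$; as $z\in X$ was arbitrary, $u$ is harmonic on all of $X$. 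Throughout, the only properties of $u$ invoked are boundedness (for martingale convergence), continuity with the RMVP (for the martingale identity), and the non-tangential boundary limits on a full-measure set (to identify the limit), mirroring Fenton's scheme for $\mathbb{D}$ but carried out in the intrinsic hyperbolic geometry of $X$.
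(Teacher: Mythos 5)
There is a genuine gap, and it sits exactly where you flagged it: the ``tracking estimate'' that the sphere walk eventually stays in the fixed tube $T_C(\xi)$ is not just hard, it is false. Even for Brownian motion on hyperbolic space (the idealized version of your walk), the path converges a.s.\ to a boundary point $\xi$ but satisfies $\limsup_{t\to\infty} d(B_t,\gamma_\xi)=\infty$ a.s.: the transverse displacement at time $t$ is of the order of an unbounded random multiple of the height scale, and a Borel--Cantelli argument produces arbitrarily large transverse excursions infinitely often. Your proposed ``restoring mechanism'' (supermartingale behavior of $s_n=d(Z_n,\gamma_\xi)$ when $s_n$ is large) can at best prevent linear growth of $s_n$; it cannot confine the walk to a tube of fixed radius, let alone radius exactly $C$. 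Quasi-geodesic fellow-traveling fails for the same reason: the walk is only a quasi-geodesic with sublinear (not bounded) deviation. There are further unpatched holes earlier in the scheme: the RMVP radius $\rho(z)\le C/2$ carries no measurability, so the transition kernel $w\mapsto \mu_{w,B(w,\rho(w))}$ need not define a Markov chain; and since $\rho$ has no positive lower bound, the walk need not converge to $\partial X$ at all (if $\sum_n\rho(Z_n)<\infty$ it converges to an interior point), which also undercuts the claimed absolute continuity of the exit law with respect to $\lambda_o$. This freedom in $\rho$ is precisely what makes RMVP problems delicate (it is what the Hansen--Nadirashvili counterexample exploits), so these are not technicalities one can wave away.

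The paper's proof shows why none of this machinery is needed: the hypothesis $\rho\le C/2$ is used \emph{deterministically}, not dynamically. One forms $v(z)=\sup_{h\in\mathscr{F}_z}h(z)$ and $w(z)=\inf_{h\in\mathscr{F}_z}h(z)$ over harmonic extensions of $u$ on admissible balls (Lemma \ref{props_of_v}, Remark \ref{props_of_w}), so $w\le u\le v$ with $v$ subharmonic and $w$ superharmonic. For $z_n$ on the radial ray $\gamma_\xi$, the extremizing ball $B(x_n,r_n)\ni z_n$ has $r_n\le C/2$, whence every $y\in\overline{B(x_n,r_n)}$ satisfies $d(y,\gamma_\xi)\le d(y,x_n)+d(x_n,z_n)\le 2r_n\le C$; so the ball lies in $T_C(\xi)$ by a one-line triangle inequality --- no path has to stay in the tube, only one ball per point of the ray. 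This gives vanishing \emph{radial} limits of $\varphi=v-w$ a.e., and then the explicit formula for the least harmonic majorant (Lemma \ref{least_harmonic_majorant}) plus dominated convergence forces $F_\varphi(o)=0$, hence $\varphi\equiv 0$ by the maximum principle, i.e.\ $u=v=w$ is harmonic. If you want to salvage a probabilistic argument, you would need to replace your tube-tracking claim by something like this deterministic ball-containment observation; as written, the crux step of your proposal cannot be repaired.
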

Both the results above follow the general scheme of the arguments in \cite{Fenton}, 
namely constructing subharmonic and superharmonic functions $v$ and $w$ respectively such that $w \leq u \leq v$, and then showing that $v-w$ vanishes identically.

\medskip

But the implementation in our general setting has some subtleties and require new technical considerations:

\begin{enumerate}
\item By using the condition on the radius function of RMVP suitably, we reduce the first step to a framework so that the machineries developed in \cite{BD} (concentration of harmonic measures and convergence of Poisson integrals) can be applied. This is done in section $3$.

\medskip

\item For the final step, we carry out a uniqueness argument for a suitable subharmonic function in terms of its normal limits or limits along radial geodesic rays terminating on a full measure subset of the boundary. More precisely, 
\begin{itemize}
\item For smooth, pre-compact domains in Riemannian manifolds, while only assuming that a bounded, real-valued subharmonic function has normal limits (see definition \ref{normal_convergence}) on a full measure subset of the boundary, we show that it admits a harmonic majorant given by the Poisson intgeral of its normal limits, that is, by the average of its normal limits against the harmonic measures (see Lemma \ref{max_princ}). This is done by working out asymptotic ratios of Poisson kernels under tubular perturbation of domains. In the absence of an explicit expression of the Poisson kernel on general domains, this is somewhat non-trivial. This is done in section $4$.
\item  In the global setup of Harmonic manifolds, we show that if a bounded subharmonic function has vanishing limits only along radial geodesic rays emanating from a fixed origin and terminating on a full measure subset of the Gromov boundary $\partial X$, then the least harmonic majorant of that subharmonic function has a zero at the origin. This is done by using an explicit expression of the least harmonic majorant in terms of a limit of spherical averages of the  subharmonic function under consideration (see Lemma \ref{least_harmonic_majorant}), which is a very recent development in the area of potential theory on non-positively curved Harmonic manifolds of purely exponential volume growth, made by the author \cite{D}. 
\end{itemize}
\end{enumerate}   

\medskip

This article is organized as follows. In section $2$, we recall some definitions and fix our notations. In section $3$, we construct two important auxiliary functions and discuss some of their crucial properties. In section $4$, we establish an $L^\infty$ maximum principle for subharmonic functions. In section $5$, we prove Theorems \ref{domain_nt} and \ref{harmonic_manifold_nt}. 

\section{Preliminaries}
Let $M$ be a Riemannian manifold. For a precompact domain $\Omega$ with smooth boundary in $M$, solving the Dirichlet problem gives rise to a family of probability measures $\{\mu_{x,\Omega}\}_{x \in \Omega}$ on $\partial \Omega$. For any $x \in \Omega$, the {\it harmonic measure on $\partial \Omega$ with respect to $x$}, $\mu_{x,\Omega}$ is defined by,
\begin{equation*}
\int_{\partial \Omega} f \: d\mu_{x,\Omega} = u_f(x) \:,
\end{equation*} 
for all continuous functions $f$ on $\partial \Omega$, where $u_f$ is the unique solution of the Dirichlet problem in $\Omega$ with boundary value $f$. The harmonic measures $\mu_{x,\Omega}$ are mutually absolutely continuous, in fact they are absolutely continuous with respect to the Riemannian measure on $\partial \Omega$. 

\medskip

For every $x \in M$, the Riemannian exponential map at $x$, $exp_x : T_xM \to M$ is a local diffeomorphism. For $x \in M$, we look at all the balls  centered at $0\in T_xM$ with radius $r>0$, $B(0,r)$ in $T_xM$ such that $exp_x|_{B(0,r)}$ is a diffeomorphism onto its image. The supremum all such radii is the {\it injectivity radius of $x$}, denoted by $inj(x)$. All metric spheres centered at $x$ with radius smaller than $inj(x)$ are geodesic spheres. Hence they are $C^\infty$-submanifolds of $M$ and are equipped with the family of harmonic measures.

\medskip

The harmonic measures allow us to define the {\it Poisson integral} of any $L^{\infty}$ function $f \in L^{\infty}(\partial \Omega)$, which is a bounded harmonic function on $\Omega$ defined by
\begin{equation*}
P[f](x) := \int_{\partial \Omega} f \ d\mu_{x, \Omega} \ , \ x \in \Omega.
\end{equation*}

\medskip

Let $\Delta$ denote the Laplace-Beltrami operator on $M$ corresponding to the underlying Riemannian metric. For any $x \in \Omega$, one denotes by $G^x_{\Omega}$ the corresponding Green function. It is the unique function on $\overline{\Omega}$ which is continuous outside $x$ and such that
\begin{equation*}
\Delta G^x_{\Omega}=-\delta_x  \text{ in } \Omega \ \text{ and } G^x_{\Omega} \equiv 0  \text{ on } \partial \Omega \ .  
\end{equation*}
For $\xi \in \partial \Omega$, the {\it Poisson kernel of $\Omega$} is defined as the normal derivative of $G^x$ along the unit inward normal vector to $\partial \Omega$ at $\xi$,
\begin{equation*}
P_{\Omega}(x,\xi):= \frac{\partial G^x_{\Omega}}{\partial n}(\xi) \:.
\end{equation*}
$P_{\Omega}(\cdot,\xi)$ is a positive harmonic function in $\Omega$, for all $\xi \in \partial \Omega$. The Poisson kernel relates the harmonic measure $\mu_{x,\Omega}$, with the normalized surface volume measure on $\partial \Omega$ (induced by the Riemannian metric) $\Sigma_{\partial \Omega}$ in the following way:
\begin{equation*}
\mu_{x,\Omega}=P_{\Omega}(x,\cdot)\ \Sigma_{\partial \Omega}\ .
\end{equation*}

Next we recall briefly some basic facts about Gromov hyperbolic spaces. These can be found in \cite{Bridson}. 

\medskip

A {\it geodesic} in a metric space $X$ is an isometric embedding $\gamma : I \subset \mathbb{R} \to X$ of an interval into $X$. The metric space $X$ is called {\it geodesic} if any two points in $X$ can be joined by a geodesic. A geodesic metric space $X$ is called {\it Gromov hyperbolic} if there is a $\delta \geq 0$, such that for every geodesic triangle in $X$, each side is contained in the $\delta$-neighbourhood of the union of the other two sides.

\medskip

The {\it Gromov boundary} of a Gromov hyperbolic space $X$ is defined to be the set $\partial X$ consisting of equivalence classes of geodesic rays in $X$. A geodesic ray is an isometric embedding $\gamma : [0,\infty) \to X$ of a closed half-line into $X$, and two geodesic rays $\gamma_1, \gamma_2$ are called equivalent if the set $\{ d(\gamma_1(t), \gamma_2(t)) \ | \ t \geq 0 \}$ is bounded. The 
equivalence class of a geodesic ray $\gamma$ is denoted by $\gamma(\infty) \in \partial X$. 

\medskip

A metric space is said to be {\it proper} if closed and bounded balls in the space 
are compact. Let $X$ be a proper, geodesic, Gromov hyperbolic space. There is a natural topology on $\overline{X} := X \cup \partial X$, called the {\it cone topology} such that $\overline{X}$ is a compact metrizable space which is a compactification of $X$. In this case, for every geodesic ray $\gamma$,  $\gamma(t) \to \gamma(\infty) \in \partial X$ as $t \to \infty$, and for any $x \in X,\ \xi \in \partial X$ there exists a geodesic ray  
$\gamma$ such that $\gamma(0) = x, \gamma(\infty) = \xi$. 

\medskip

Now let us recall some basic properties of a non-positively curved Harmonic manifold of purely exponential volume growth $X$ defined in the Introduction. On such a manifold, the harmonic functions satisfy the usual mean value property on spheres with respect to the surface volume measure induced by the Riemannian metric \cite{Willmore}.

\medskip

In \cite{Kn12}, Knieper showed that for $X$, a simply connected non-compact Harmonic manifold of purely exponential
volume growth with respect to a fixed basepoint $o \in X$, the condition of purely exponential volume growth is equivalent to either of the following conditions:
\begin{enumerate}
\item $X$ is Gromov hyperbolic.
\item $X$ has rank one.
\item The geodesic flow of $X$ is Anosov with respect to the Sasaki metric.
\end{enumerate}

\medskip

Moreover, the Gromov boundary coincides with the visibility boundary $\partial X$ introduced in \cite{Eberlein}. One has a family of measures on $\partial X$ called the visibility measures $\{\lambda_x\}_{x \in X}$. For $x \in X$, let $\theta_x$ denote the normalized canonical measure on $T^1_x X$ (the unit tangent space at $x$), induced by the Riemannian metric and then the visibility measure $\lambda_x$ is obtained as the push-forward of $\theta_x$ to the boundary $\partial X$ under the radial projection. The visibility measures $\lambda_x$ are pairwise absolutely continuous. This can be found in \cite{KP16}.

\medskip

A subharmonic function $f$ on $X$ is said to have a harmonic majorant if there exists a harmonic function $h$ on $X$ such that 
\begin{equation*}
f(x) \le h(x)\:,\text{ for all } x \in X\:.
\end{equation*}
A harmonic function $h$ on $X$ is called the {\it least harmonic majorant} of a subharmonic function $f$ on $X$ if
\begin{itemize}
\item $h$ is a harmonic majorant of $f$ and
\item $h(x) \le H(x)$ for all $x \in X$, whenever $H$ is a harmonic majorant of $f$\:.
\end{itemize} 
For $x \in X$ and $v \in T^1_x X$, let $\gamma_{x,v}$ denote the geodesic such that $\gamma_{x,v}(0)=x$ and $\gamma'_{x,v}(0)=v$.
The following result concerning an explicit expression of the least harmonic majorant of a subharmonic function follows from the proof of Proposition $4.10$ in \cite{D}: 
\begin{lemma} \label{least_harmonic_majorant}
Let $f$ be a subharmonic function having a harmonic majorant on $X$, then $f$ has a least harmonic manjorant $F_f$ and it is given by,
\begin{equation*}
F_f(x)= \displaystyle\lim_{r \to \infty} \int_{T^1_x X} f\left(\gamma_{x,v}(r)\right)\:d\theta_x(v) \ , \text{ for all } x \in X\ .
\end{equation*}
\end{lemma}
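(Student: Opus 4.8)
The plan is to realize the claimed limit of spherical averages as the least harmonic majorant obtained by exhausting $X$ with geodesic balls and solving Dirichlet problems. Write $A_f(x,r) := \int_{T^1_x X} f\!\left(\gamma_{x,v}(r)\right) d\theta_x(v)$ for the average of $f$ over the geodesic sphere $\partial B(x,r)$. Since $X$ is harmonic, the volume density in geodesic polar coordinates around $x$ is radial, so the push-forward of $\theta_x$ under $v \mapsto \exp_x(rv)$ is precisely the normalized surface measure on $\partial B(x,r)$, and $A_f(x,r)$ is the genuine spherical mean. The first fact I would record is that, because harmonic functions on $X$ satisfy the mean value property with respect to this surface measure, the harmonic measure of $B(x,r)$ relative to its center coincides with the normalized surface measure; equivalently, if $H^x_r$ denotes the solution of the Dirichlet problem on $B(x,r)$ with boundary data $f|_{\partial B(x,r)}$, then $H^x_r(x) = A_f(x,r)$.

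Next I would fix $x$ and study the family $\{H^x_r\}_{r>0}$ as $r\to\infty$. Three facts drive the argument, each a consequence of the maximum principle together with subharmonicity of $f$: (i) $f \le H^x_r$ throughout $B(x,r)$, since $H^x_r$ is harmonic, agrees with $f$ on $\partial B(x,r)$, and $f$ is subharmonic; (ii) $H^x_r$ is non-decreasing in $r$, since for $r_1<r_2$ one has $H^x_{r_1}=f\le H^x_{r_2}$ on $\partial B(x,r_1)$, whence $H^x_{r_1}\le H^x_{r_2}$ on $B(x,r_1)$; and (iii) $H^x_r \le h$ in $B(x,r)$ for every harmonic majorant $h$ of $f$, since $H^x_r = f \le h$ on the boundary. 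In particular $A_f(x,r)=H^x_r(x)$ is monotone increasing in $r$ and bounded above by $h(x)$, so the limit in the statement exists and is finite.

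Now I would invoke Harnack's monotone convergence principle: an increasing family of harmonic functions that is locally bounded above converges locally uniformly to a harmonic function. Applied to $\{H^x_r\}_r$ this yields a harmonic function $\widetilde F_x$ on $X$ with $H^x_r \uparrow \widetilde F_x$. By (i), $\widetilde F_x \ge f$ everywhere, so $\widetilde F_x$ is a harmonic majorant of $f$; by (iii), $\widetilde F_x \le h$ for every harmonic majorant $h$. Hence $\widetilde F_x$ is \emph{the} least harmonic majorant of $f$, and in particular it does not depend on the chosen center $x$; call it $F_f$. Evaluating at the center and using the mean value identity,
\[
F_f(x) = \widetilde F_x(x) = \lim_{r\to\infty} H^x_r(x) = \lim_{r\to\infty} A_f(x,r) = \lim_{r\to\infty}\int_{T^1_x X} f\!\left(\gamma_{x,v}(r)\right) d\theta_x(v),
\]
which is exactly the asserted formula and simultaneously establishes existence of the least harmonic majorant.

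The step I expect to be most delicate is the identification $H^x_r(x) = A_f(x,r)$, that is, verifying that on a harmonic manifold the harmonic measure seen from the center of a geodesic ball is the normalized surface (equivalently, uniform) measure; this is exactly where the harmonicity of $X$ (radiality of the volume density, and hence the mean value property on spheres) is indispensable, whereas the remaining steps are standard maximum-principle and Harnack arguments valid on any manifold. A secondary point requiring care is that Harnack's principle is applied to a one-parameter increasing family rather than a sequence, which is harmless since the monotone limit may be computed along any sequence $r_n\uparrow\infty$ and the locally uniform limit is independent of the chosen sequence.
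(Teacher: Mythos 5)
The paper does not actually prove this lemma internally: it imports it wholesale, stating that it ``follows from the proof of Proposition 4.10 in \cite{D}''. Your argument is therefore a self-contained substitute rather than a parallel of anything in the text, and it is the standard (and essentially correct) one: exhaust $X$ by geodesic balls (here non-positive curvature gives infinite injectivity radius, so $B(x,r)$ is a genuine geodesic ball for every $r$), use Willmore's mean value property --- which the paper quotes in Section 2 --- to identify the harmonic measure of $B(x,r)$ seen from its center with the normalized surface measure, and then run the maximum-principle/Harnack scheme to produce the least harmonic majorant as the monotone limit of the Dirichlet solutions $H^x_r$. You correctly isolate the one place where harmonicity of $X$ is indispensable, and the uniqueness of least harmonic majorants does make the limit independent of the chosen center.

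The only point you gloss over is regularity of the boundary data: a subharmonic $f$ is merely upper semicontinuous, so you cannot literally solve the Dirichlet problem with data $f|_{\partial B(x,r)}$ and assert that $H^x_r$ ``agrees with $f$ on $\partial B(x,r)$'', which is what your steps (i)--(iii) use. The standard repair is to define $H^x_r(y) := \int_{\partial B(x,r)} f \, d\mu_{y,B(x,r)}$ directly, write $f$ on the sphere as a decreasing limit of continuous functions $g_k$, and obtain (i)--(iii) by applying the maximum principle to each $u_{g_k}$ and passing to the limit (for (ii) one also needs a small compactness argument to compare $g_k$ with the continuous function $H^x_{r_2}$ up to an $\varepsilon$). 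One should also record that $f \not\equiv -\infty$, so local integrability makes the spherical means finite for large $r$ and the monotone limit meaningful. These are routine potential-theoretic repairs, not conceptual gaps; in fact, in the paper's only application of the lemma the function $\varphi = v - w$ is continuous, so your argument applies verbatim there.
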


\section{Construction of two auxiliary functions}
In this section, we construct a subharmonic function and a superharmonic function that will play a pivotal role in the proofs of our main theorems.

\medskip

Let $D$ be either $\Omega$ (as in Theorem \ref{domain_nt}) or $X$ (as in Theorem   \ref{harmonic_manifold_nt}). For $z \in D$, 
\begin{itemize}
\item under the hypothesis of Theorem \ref{domain_nt}, we define $\rho_0(z)=\kappa d(z, \partial D)$,
\item under the hypothesis of Theorem \ref{harmonic_manifold_nt}, we define $\rho_0(z) \equiv C/2\:.$
\end{itemize}

\medskip

Now let $\mathscr{F}_z$ be the collection of 
harmonic extensions of $u$ on balls $B(x,r)$ where $x \in D, r \le \rho_0(x), z \in B(x,r)$ and we have 
\begin{equation*}
u(x)=\int_{\partial B(x,r)} u(y)\:d\mu_{x,B(x,r)}(y) \:.
\end{equation*}

\medskip

We now define,
\begin{equation} \label{defn_v}
v(z):= \sup_{h \in \mathscr{F}_z} h(z) \:, \text{ for all } z \in D.
\end{equation}

\medskip

Then note that, since $u$ satisfies the RMVP, for any $z \in D$, there exists $h \in \mathscr{F}_z$ such that $u(z) = h(z)$, and hence  
\begin{equation} \label{u<v}
u(z) \le v(z) \:, \text{ for all } z \in D.
\end{equation}
The next lemma summarizes some important properties of $v$.
\begin{lemma} \label{props_of_v}
For all $z \in D$, there exists $h \in \mathscr{F}_z$ such that $v(z)=h(z)$. Moreover, $v$ is bounded, continuous and subharmonic in $D$. 
\end{lemma}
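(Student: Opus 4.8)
The plan is to establish the four assertions in the order: boundedness, attainment of the supremum, continuity, subharmonicity, with attainment feeding into the last two.

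Boundedness is immediate. If $|u|\le M$ on $D$, then every $h\in\mathscr{F}_z$ is a Poisson integral of $u$ restricted to a geodesic sphere, so $|h|\le M$ by the maximum principle (harmonic measures are probability measures), whence $|v|\le M$; together with \eqref{u<v} this controls $v$ on both sides. For the attainment I would fix $z$, take a maximizing sequence $h_n\in\mathscr{F}_z$ attached to balls $B(x_n,r_n)\ni z$ with $h_n(z)\to v(z)$, and use compactness. The constraints $d(z,x_n)<r_n\le\rho_0(x_n)$ confine the centres to a bounded region (in the domain case they yield $d(z,x_n)<\tfrac{\kappa}{1-\kappa}\,d(z,\partial\Omega)$, in the manifold case simply $d(z,x_n)<C/2$) and the radii to a bounded interval, so by properness of $D$ I may pass to a subsequence with $x_n\to x_0$, $r_n\to r_0$. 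If $r_0=0$, or if $r_0>0$ but $d(z,x_0)=r_0$ (so $z$ lands on the limiting sphere), then $h_n(z)\to u(z)$, giving $v(z)=u(z)$, which is attained by the harmonic extension on the RMVP ball of $z$ itself. Otherwise $r_0>0$ and $z\in B(x_0,r_0)$, and I would invoke convergence of Poisson integrals under the smooth perturbation $B(x_n,r_n)\to B(x_0,r_0)$ to conclude that $h_n\to h_0$ locally uniformly near $z$ and near $x_0$, where $h_0$ is the harmonic extension of $u$ on $B(x_0,r_0)$; passing the equation $u(x_n)=h_n(x_n)$ to the limit gives $u(x_0)=h_0(x_0)$, so $h_0\in\mathscr{F}_z$ and $v(z)=h_0(z)$.

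I expect this convergence step to be the main obstacle: controlling solutions of the Dirichlet problem as the geodesic balls vary, without explicit Poisson kernels. This is precisely where the machinery of \cite{BD} (concentration of harmonic measures and convergence of Poisson integrals) is to be applied, as anticipated in the Introduction.

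Continuity I would obtain from lower and upper semicontinuity. For lower semicontinuity, take the attaining $h\in\mathscr{F}_z$ on $B(x,r)\ni z$; since $h\in\mathscr{F}_{z'}$ for every $z'\in B(x,r)$, one has $v(z')\ge h(z')$, and letting $z'\to z$ gives $\liminf_{z'\to z}v(z')\ge h(z)=v(z)$. For upper semicontinuity, take $z_n\to z$ realizing $\limsup_{z'\to z}v(z')$, write $v(z_n)=h_n(z_n)$ by attainment, and rerun the compactness-and-convergence argument on the balls attached to $h_n$; the limit $h_0\in\mathscr{F}_z$ (or the degenerate value $u(z)$) satisfies $\lim_n h_n(z_n)=h_0(z)\le v(z)$, so $\limsup_{z'\to z}v(z')\le v(z)$.

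Finally, for subharmonicity, since $v$ is continuous it suffices to verify the sub-mean value inequality $v(z_0)\le\int_{\partial B(z_0,s)}v\,d\mu_{z_0,B(z_0,s)}$ for small geodesic balls; equivalently, with $H$ the harmonic extension of $v|_{\partial B}$ on $B=B(z_0,s)\Subset D$, that $v\le H$ on $B$. Suppose not, so $m:=\max_{\overline{B}}(v-H)>0$ is attained at an interior point $z_*$. By attainment choose $h_*\in\mathscr{F}_{z_*}$ on a ball $B_*\ni z_*$; as $h_*\le v$ on $B_*$, the harmonic function $w:=h_*-H$ on $B_*\cap B$ satisfies $w\le v-H\le m$ there, with $w(z_*)=v(z_*)-H(z_*)=m$. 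The strong maximum principle forces $w\equiv m$ on the component $U$ of $B_*\cap B$ containing $z_*$; then $v\ge h_*=H+m$ on $U$, and combined with $v-H\le m$ this gives $v-H\equiv m$ on the open set $U$. Hence $\{v-H=m\}$ is open, and being also closed in the connected set $B$ and nonempty, it is all of $B$; by continuity $v-H\equiv m$ on $\overline{B}$, contradicting $v\le H$ on $\partial B$. Therefore $v\le H$ on $B$, which gives the sub-mean value inequality and shows $v$ is subharmonic.
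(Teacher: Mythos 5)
Your proposal is correct and follows essentially the same route as the paper: boundedness via the maximum principle, attainment via a maximizing sequence and compactness of the balls with the same three-case analysis (degenerate ball, $z$ on the limiting sphere, $z$ interior) deferring the convergence-of-Poisson-integrals step to \cite{BD}, and continuity via the same liminf/limsup argument using the attaining extensions. The only divergence is the subharmonicity step, where the paper is much shorter: with $h$ the attaining extension at $z_0$ on the ball $B$, the inequality $v \ge h$ on $B$ immediately gives $v(z_0)=h(z_0)=\int_{\partial B'}h\,d\mu_{z_0,B'}\le\int_{\partial B'}v\,d\mu_{z_0,B'}$ for every small ball $B'$ centered at $z_0$; your maximum-principle/connectedness argument is a correct but longer re-derivation of this, built from the same two ingredients (attainment and $h_*\le v$ on $B_*$).
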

\begin{proof}
First we show that the supremum in the definition (\ref{defn_v}) is attained. Choose and fix $z_0 \in D$. Let $\{h_n\}_{n=1}^\infty \in \mathscr{F}_{z_0}$ such that 
\begin{equation*}
h_n(z_0) \to v(z_0) \: \text{ as } n \to \infty \:.
\end{equation*}
For each $n \in \N$, let $B_n$ be the ball such that $h_n$ is the harmonic extension of $u$ on $B_n$.

\medskip

Thanks to the conditions on $\rho_0$, passing to a subsequence, we may assume that $\overline{B_n} \to \overline{B}$ and $\partial B_n \to \partial B$ (for some ball $B=B(x,r) \subset D$ with $r \le \rho_0(x)$) in the Hausdorff topology on compacts in $\overline{D}$. As $z_0 \in B_n$ for all $n \in \N$, there are three cases: 
\begin{enumerate}
\item[(i)] $B=\{z_0\}$\:,
\item[(ii)] $B$ is non-degenerate and $z_0$ lies in the interior of $B$,
\item[(iii)] $B$ is non-degenerate and $z_0 \in \partial B$.
\end{enumerate}

\medskip

In the setup of Theorem \ref{domain_nt}, the condition on $\rho_0$ ensures that there exists a domain $D'$ compactly contained in $\Omega$ such that all the balls $B_n$ and $B$ are contained in $D'$. In the setup of Theorem \ref{harmonic_manifold_nt}, as $z_0 \in B_n$ for all $n \in \N$, by the condition on $\rho_0$, we see that all the balls $B_n$ and $B$ are contained in the ball $B(z_0,C)$. Thus in both the cases, there exists a domain $D'$ compactly contained in $D$ such that all the balls $B_n$ and $B$ are contained in $D'$. Then by continuity of $u$ on $\overline{D'} \subset D$ and proceeding as in the scenario of a pre-compact domain in a Riemannian manifold in the proof of Lemma $4.1$ in \cite{BD}, we get that in all the above cases, there exists $h \in \mathscr{F}_{z_0}$ such that $v(z_0)=h(z_0)$. This completes the proof of the first part of the Lemma.

\medskip

To show continuity of $v$, we again choose and fix $z_0 \in D$. By the first part of this Lemma, there exists $h \in \mathscr{F}_{z_0}$ such that $v(z_0)=h(z_0)$. Let $B$ be the ball in $D$ containing $z_0$ such that $h$ is the harmonic extension of $u$ on $B$. We note that $h \in \mathscr{F}_z$, for all $z \in B$. Then by definition of $v$ (\ref{defn_v}), we have
\begin{equation} \label{v>h}
v(z) \ge h(z) \ , \text{ for all } z \in B.
\end{equation}
Hence by continuity of $h$,
\begin{equation} \label{liminf}
\liminf_{z \to z_0} v(z) \ge \liminf_{z \to z_0} h(z) = h(z_0) = v(z_0)\ .
\end{equation}

\medskip

Now for the limsup, we consider $\{z_n\}_{n=1}^\infty \subset D$ such that $z_n \to z_0$ as $n \to \infty$. Let $h_n \in \mathscr{F}_{z_n}$ be the harmonic extensions of $u$ on balls $B_n \subset D$ containing $z_n$ such that $v(z_n)=h_n(z_n)$. Thanks to the conditions on $\rho_0$, passing to a subsequence, we may assume that $\overline{B_n} \to \overline{B}$ and $\partial B_n \to \partial B$ (for some ball $B=B(x,r) \subset D$ with $r \le \rho_0(x)$) in the Hausdorff topology on compacts in $\overline{D}$. Again there are three cases: 
\begin{enumerate}
\item[(i)] $B=\{z_0\}$\:,
\item[(ii)] $B$ is non-degenerate and $z_0$ lies in the interior of $B$,
\item[(iii)] $B$ is non-degenerate and $z_0 \in \partial B$.
\end{enumerate}
Then the conditions on $\rho_0$ ensures that there exists a domain $D'$ compactly contained in $D$ such that all the balls $B_n$ and $B$ are contained in $D'$. Then using continuity of u on $\overline{D'} \subset D$ and proceeding as in the scenario of a pre-compact domain in a Riemannian manifold in the proof of Lemma $4.2$ in \cite{BD}, combining all the above cases, we get that
\begin{equation} \label{limsup}
\limsup_{z \to z_0} v(z) \le v(z_0)\:.
\end{equation}
Thus combining (\ref{liminf}) and (\ref{limsup}), we get the continuity of $v$.

\medskip

To see that $v$ is subharmonic in $D$, again we choose and fix $z_0 \in D$. Let $h \in \mathscr{F}_{z_0}$ be the harmonic extension of $u$ on a ball $B \subset D$ containing $z_0$ such that $v(z_0)=h(z_0)$. Then for all balls $B'$ centered at $z_0$ and compactly contained in $B$, by (\ref{v>h}), it follows that
\begin{equation*}
v(z_0)=h(z_0)=\int_{\partial B'} h(y)\: d\mu_{z_0,B'}(y) \le \int_{\partial B'} v(y)\: d\mu_{z_0,B'}(y) \:.
\end{equation*}
Thus $v$ is subharmonic at $z_0$. Since $z_0 \in D$ was arbitrarily chosen and subharmonicity is a local property, it follows that $v$ is subharmonic in $D$. 

\medskip

Boundedness of $v$ is a simple consequence of the facts that $u$ is bounded, (\ref{u<v}), the definition of $v$ (\ref{defn_v}) and the maximum principle.
\end{proof}

\begin{remark} \label{props_of_w}
One may also define $w(z):= \displaystyle\inf_{h \in \mathscr{F}_z} h(z)$, for all $z \in D$. Proceeding similarly as above it can be shown that 
\begin{itemize}
\item for all $z \in D$, there exists $h \in \mathscr{F}_z$ such that $w(z)=h(z)$,
\item $w$ is bounded, continuous, superharmonic in $D$ satisfying $w(z) \le u(z)$, for all $z \in D$ and hence
\begin{equation*}
w \le u \le v \:.
\end{equation*}
\end{itemize}
\end{remark}

\section{An $L^\infty$ maximum principle}
We start off this section by defining the notion of normal convergence at a boundary point of a smooth pre-compact domain.
\begin{definition} \label{normal_convergence}
Let $\Omega$ be a smooth pre-compact domain in a Riemannian manifold $M$. Then for $\xi \in \partial \Omega$, a sequence $\{z_n\}_{n=1}^\infty \subset \Omega$ is said to {\em converge normally to $\xi$} if $z_n \to \xi$ with
\begin{equation*}
\frac{d(z_n,\xi)}{d(z_n,\partial \Omega)} \to 1 \text{ as } n \to \infty\:. 
\end{equation*}
\end{definition}

Next we see the main result of this section, a pointwise maximum principle based on only the normal limits of a subharmonic function on a full measure subset of the boundary:  
\begin{lemma} \label{max_princ}
Let $\varphi$ be a bounded, real-valued subharmonic function on a smooth, pre-compact domain $\Omega$ in a Riemannian manifold $M$ such that  
\begin{equation*}
\displaystyle\lim_{z \to \xi} \varphi(z) = f(\xi) \:,
\end{equation*}
whenever $z$ converges normally to $\xi$, for almost every boundary point $\xi$, with respect to the harmonic measures on $\partial \Omega$, where $f \in L^\infty(\partial \Omega)$. Then  
\begin{equation*}
\varphi(x) \le P[f](x) \:,
\end{equation*}
for all $x \in \Omega$, where $P[f]$ is the Poisson integral of $f$.
\end{lemma}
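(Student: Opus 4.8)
The plan is to approximate $\Omega$ from inside by its inner parallel domains and to transport the subharmonic mean-value inequality out to the genuine boundary $\partial\Omega$. For small $t>0$ set $\Omega_t:=\{z\in\Omega:\ d(z,\partial\Omega)>t\}$; since $\partial\Omega$ is smooth and compact, for all $t$ below its focal radius $\Omega_t$ is again a smooth pre-compact domain and the inward normal map $\Phi_t:\partial\Omega\to\partial\Omega_t$, $\Phi_t(\xi):=\exp_\xi(t\,n(\xi))$ (with $n$ the unit inward normal), is a diffeomorphism whose Jacobian $J_t$ relating $\Sigma_{\partial\Omega_t}$ to $\Sigma_{\partial\Omega}$ tends to $1$ uniformly as $t\to 0$. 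Fix $x\in\Omega$. For all sufficiently small $t$ we have $x\in\Omega_t$, and since $\varphi$ is bounded and subharmonic,
\begin{equation*}
\varphi(x)\le\int_{\partial\Omega_t}\varphi\;d\mu_{x,\Omega_t}=\int_{\partial\Omega}\varphi(\Phi_t(\xi))\,p_t(x,\xi)\,d\Sigma_{\partial\Omega}(\xi),
\end{equation*}
where, writing $\mu_{x,\Omega_t}=P_{\Omega_t}(x,\cdot)\,\Sigma_{\partial\Omega_t}$, the pulled-back density is $p_t(x,\xi):=P_{\Omega_t}(x,\Phi_t(\xi))\,J_t(\xi)$.

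Next I would let $t\to0$. For every $\xi\in\partial\Omega$ one has $d(\Phi_t(\xi),\xi)=t=d(\Phi_t(\xi),\partial\Omega)$, so $\Phi_t(\xi)$ converges normally to $\xi$ in the sense of Definition \ref{normal_convergence}; the hypothesis therefore gives $\varphi(\Phi_t(\xi))\to f(\xi)$ for almost every $\xi$. If in addition one establishes that $p_t(x,\cdot)\to P_\Omega(x,\cdot)$ together with uniform integrability (for instance a uniform two-sided bound by a fixed multiple of $P_\Omega(x,\cdot)$), then a Vitali/dominated-convergence argument, using that $\varphi$ and $f$ are bounded, yields
\begin{equation*}
\varphi(x)\le\lim_{t\to0}\int_{\partial\Omega}\varphi(\Phi_t(\xi))\,p_t(x,\xi)\,d\Sigma_{\partial\Omega}(\xi)=\int_{\partial\Omega}f\,P_\Omega(x,\cdot)\,d\Sigma_{\partial\Omega}=\int_{\partial\Omega}f\,d\mu_{x,\Omega}=P[f](x),
\end{equation*}
which is the desired inequality. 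Note that this route never needs a Fatou theorem for $P[f]$: the final right-hand side is simply the definition of the Poisson integral.

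The hard part is the kernel step just quoted. Since $J_t\to1$ uniformly, everything reduces to the asymptotic ratio $P_{\Omega_t}(x,\Phi_t(\xi))/P_\Omega(x,\xi)\to1$, uniformly in $\xi$, plus the uniform two-sided bound supplying the domination; with no explicit Poisson kernel available this must be extracted from the Green functions. Here I would compare $G^x_{\Omega_t}$ and $G^x_\Omega$: their difference is harmonic on $\Omega_t$ (the $-\delta_x$ singularities cancel) with boundary values $G^x_\Omega|_{\partial\Omega_t}$, and by $C^1$-regularity of $G^x_\Omega$ up to $\partial\Omega$ one has $G^x_\Omega(\Phi_t(\xi))=t\,P_\Omega(x,\xi)+o(t)$, so the leading normal behaviour of the two Green functions matches. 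To turn this into convergence of the normal derivatives I would pull the whole boundary-value problem back to the fixed domain $\Omega$ through $\Phi_t$, so that $G^x_{\Omega_t}$ becomes the Green function of an elliptic operator differing from $\Delta$ by an $O(t)$, smoothly varying perturbation, and invoke Schauder estimates up to the boundary, uniform in $t$, to obtain $C^1$-convergence of the Green functions and hence of $P_{\Omega_t}$; a boundary Harnack inequality then delivers the uniform comparability of $p_t(x,\cdot)$ with $P_\Omega(x,\cdot)$. This perturbation analysis of Poisson kernels under the tubular deformation $\Omega_t\to\Omega$ is exactly where the genuine difficulty lies.
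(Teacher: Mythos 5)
Your proposal follows essentially the same route as the paper's proof: approximation by inner parallel domains, the sub-mean value inequality on $\Omega_t$, pulling the resulting integral back to $\partial\Omega$ via the normal projection, factoring the density as a Poisson-kernel ratio times a Jacobian (both tending to $1$), and concluding by dominated convergence using the normal-limit hypothesis at the points $\Phi_t(\xi)$. The only difference is in how the kernel asymptotics are justified: where you sketch a pulled-back perturbation argument with uniform Schauder estimates and a boundary Harnack inequality, the paper simply invokes It\^{o}'s $C^1$-dependence of Green functions on their defining domains (Lemma \ref{Poisson_asymptotic_lemma}) and gets the domination needed for dominated convergence from an elementary compactness bound on $P_\Omega(x,\cdot)$ (Lemma \ref{Poisson_estimate_lemma}), since $x$ is a fixed interior point.
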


\begin{remark} \label{comparison_max_princ}
Lemma \ref{max_princ} implies that a suitable subharmonic function on a pre-compact domain admits a harmonic majorant given by the Poisson integral of its normal boundary limits. We note that if a function has non-tangential limit or unrestricted limit, say $L$, at a boundary point $\xi$, then in particular, it has normal limit $L$ at $\xi$. Hence from Lemma \ref{max_princ}, one can conclude similar $L^\infty$ maximum principles for bounded, continuous subharmonic functions that have non-tangential or unrestricted limits almost everywhere on the boundary. In particular, Lemma $5.1$ of \cite{BD}, which is concerned with unrestricted limits of a subharmonic function, is a corollary of Lemma \ref{max_princ}.    
\end{remark}

For proving Lemma \ref{max_princ}, we will need the following Lemmas regarding the Poisson kernel. 
\begin{lemma} \label{Poisson_estimate_lemma}
Let $\Omega$ be a smooth, pre-compact domain in a Riemannian manifold $M$. Let $\delta>0$, be such that $\Omega_{\delta}:=\{x \in \Omega: d(x,\partial \Omega) \ge \delta\}$ is non-empty. Then there exists a constant $C \ge 1$, which continuously depends only on $\Omega$ and $\delta$, such that for every $\xi \in \partial \Omega$ and every $x \in \Omega_{\delta}$, the Poisson kernel satisfies
\begin{equation} \label{Poisson_estimate} 
\frac{1}{C} \le P_\Omega(x,\xi) \le C \:.
\end{equation}
\end{lemma}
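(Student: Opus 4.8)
The plan is to establish the two-sided bound on the Poisson kernel by exploiting compactness together with the positivity and continuity properties of the kernel that were recorded earlier. Recall that $P_\Omega(\cdot,\xi)$ is a positive harmonic function on $\Omega$ for each fixed $\xi\in\partial\Omega$, and that it arises as the normal derivative of the Green function $G^x_\Omega$ along the inward normal at $\xi$. The key structural fact I would lean on is the joint regularity of the Poisson kernel: for a smooth, pre-compact domain the Green function $G^x_\Omega(\cdot)$ depends smoothly on the pair of arguments away from the diagonal, and its inward normal derivative at the boundary gives a function $(x,\xi)\mapsto P_\Omega(x,\xi)$ that is jointly continuous on $\Omega\times\partial\Omega$. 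Since $\Omega_\delta$ is a compact subset of $\Omega$ (it is closed and bounded, hence compact by precompactness of $\Omega$) that stays a definite distance from the diagonal singularity, the kernel is jointly continuous on the compact set $\Omega_\delta\times\partial\Omega$.

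The first step would be to argue that $P_\Omega(x,\xi)>0$ for all $(x,\xi)\in\Omega\times\partial\Omega$. This follows from the fact that $P_\Omega(\cdot,\xi)$ is a positive harmonic function on $\Omega$ which is not identically zero (it is the density of the nontrivial harmonic measure $\mu_{x,\Omega}$ against the surface measure), combined with the strong maximum principle or Harnack's inequality, which forbid a positive harmonic function from vanishing at an interior point. The second step is to invoke the joint continuity described above: the function $(x,\xi)\mapsto P_\Omega(x,\xi)$ is continuous and strictly positive on the compact set $K:=\Omega_\delta\times\partial\Omega$. A continuous, strictly positive function on a compact set attains a positive minimum $m>0$ and a finite maximum $N<\infty$. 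Setting $C:=\max\{N,1/m,1\}$ then yields $\tfrac1C\le P_\Omega(x,\xi)\le C$ for all $x\in\Omega_\delta$ and all $\xi\in\partial\Omega$, which is precisely \eqref{Poisson_estimate}.

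The final step is to address the claim that $C$ \emph{continuously depends only on $\Omega$ and $\delta$}. Here I would observe that $m$ and $N$, being the extrema of the jointly continuous kernel over $\Omega_\delta\times\partial\Omega$, vary continuously as $\delta$ shrinks (the sets $\Omega_\delta$ are nested and exhaust the interior), so one may take $C=C(\Omega,\delta)$ and arrange it to depend continuously on $\delta$; monotonicity of $\Omega_\delta$ in $\delta$ makes this transparent.

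The main obstacle, and the only genuinely nontrivial input, is the joint continuity and positivity of the Poisson kernel on $\Omega_\delta\times\partial\Omega$ for a general smooth pre-compact domain, since there is no explicit formula available. I would justify this by elliptic regularity for the Green function of $\Delta$ on a smooth domain: $G^x_\Omega$ is smooth up to the boundary away from the pole $x$, and its dependence on the pole is smooth for $x$ ranging in a compact set bounded away from $\partial\Omega$; taking the inward normal derivative at $\xi$ preserves this joint smoothness, hence joint continuity. Boundary regularity theory (for instance Schauder estimates, using smoothness of $\partial\Omega$ together with the fact that every boundary point is regular for the Dirichlet problem) guarantees that the normal derivative is well defined and varies continuously, so the compactness argument applies without difficulty once this regularity is in hand.
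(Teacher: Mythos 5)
Your proof is correct and takes essentially the same route as the paper: the paper's entire argument is the compactness step you describe, namely that $P_\Omega(\cdot,\cdot)$ is a positive continuous function on the compact set $\Omega_\delta\times\partial\Omega$, hence admits bounds $C_1\le P_\Omega(x,\xi)\le C_2$ with $C_1,C_2>0$ depending continuously on $\Omega$ and $\delta$, after which one sets $C=\max\{C_2,\,1/C_1\}$. The regularity and positivity facts you elaborate on (elliptic boundary regularity of the Green function, strong maximum principle) are simply taken for granted in the paper's one-line proof.
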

\begin{proof}
As $P_\Omega(\cdot,\cdot)$ is a positive continuous function on the compact set  $\Omega_\delta \times \partial \Omega$, there exist positive constants $C_1,C_2$ continuously depending only on $\Omega$ and $\delta$ such that for any $x \in \Omega_{\delta}$ and any $\xi \in \partial \Omega$,
\begin{equation} \label{Poisson_estimate_eq1}
C_1 \le P_\Omega(x,\xi) \le C_2 \ .
\end{equation}

\medskip

The estimate (\ref{Poisson_estimate}) now follows from (\ref{Poisson_estimate_eq1}) by setting $C=\max\{C_2,\ 1/C_1\}$.
\end{proof}
A consequence of Lemma \ref{Poisson_estimate_lemma}, is the following asymptotic of ratios of Poisson kernels under perturbation of domains:
\begin{lemma}\label{Poisson_asymptotic_lemma}
Let $\Omega$ be a smooth, pre-compact domain in a Riemannian manifold $M$ and $x \in \Omega$. For $\varepsilon>0$ sufficiently small, let $\Omega_{\varepsilon}:=\{y \in \Omega : d(y,\partial \Omega)> \varepsilon\}$ be a non-empty domain such that 
\begin{itemize}
\item $\overline{\Omega} \setminus \overline{\Omega_{\varepsilon}}$ is a tubular neighborhood of $\partial \Omega$,
\item $x \in \Omega_\varepsilon$ with $d(x,\partial \Omega_\varepsilon) > \frac{1}{2} d(x,\partial \Omega)$.
\end{itemize}
Let $P_\Omega$ and $P_{\Omega_\varepsilon}$ denote the Poisson kernels of $\Omega$ and $\Omega_{\varepsilon}$ respectively. Let $\psi_{\varepsilon}$ denote the radial projection map from $\partial \Omega$ onto $\partial \Omega_{\varepsilon}$. Then for all $\xi \in \partial \Omega$, 
\begin{equation*}  
\frac{P_{\Omega_\varepsilon}(x,\psi_{\varepsilon}(\xi))}{P_\Omega(x,\xi)} = 1+o(1) \ \text{ as } \varepsilon \to 0 \ .
\end{equation*}
\end{lemma}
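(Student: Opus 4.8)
The plan is to pass through the Green functions, using $P_\Omega(x,\xi)=\partial G^x_\Omega/\partial n(\xi)$ and $P_{\Omega_\varepsilon}(x,\psi_\varepsilon(\xi))=\partial G^x_{\Omega_\varepsilon}/\partial n_\varepsilon(\psi_\varepsilon(\xi))$, and to compare everything on the tubular shell $\overline{\Omega}\setminus\overline{\Omega_\varepsilon}$, where all relevant objects are smooth. Let $n$ be the inward unit normal on $\partial\Omega$ and $\gamma_\xi(t)=\exp_\xi(t\,n(\xi))$ the inward normal geodesic, so that $\psi_\varepsilon(\xi)=\gamma_\xi(\varepsilon)$; since $\partial\Omega_\varepsilon$ is the level set $\{d(\cdot,\partial\Omega)=\varepsilon\}$ and the gradient of the distance function is the unit normal geodesic field, $\gamma'_\xi(\varepsilon)$ is exactly the inward unit normal $n_\varepsilon$ to $\partial\Omega_\varepsilon$ at $\psi_\varepsilon(\xi)$. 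Writing $F(\xi,t):=G^x_\Omega(\gamma_\xi(t))$ and $F_\varepsilon(\xi,t):=G^x_{\Omega_\varepsilon}(\gamma_\xi(t))$, we then have $P_\Omega(x,\xi)=\partial_t F(\xi,0)$ and $P_{\Omega_\varepsilon}(x,\psi_\varepsilon(\xi))=\partial_t F_\varepsilon(\xi,\varepsilon)$, so the goal becomes to show $\partial_t F_\varepsilon(\xi,\varepsilon)=\partial_t F(\xi,0)+o(1)$ uniformly in $\xi$. Since $G^x_\Omega$ is a fixed function, smooth up to $\partial\Omega$ away from $x$ (and $x$ stays at distance $>\tfrac12 d(x,\partial\Omega)$ from the shell), I would first record the two uniform Taylor facts
\[
F(\xi,\varepsilon)=\varepsilon\,P_\Omega(x,\xi)+O(\varepsilon^2),\qquad \partial_t F(\xi,\varepsilon)=P_\Omega(x,\xi)+O(\varepsilon),
\]
both using $F(\xi,0)=0$.

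The heart of the argument is the harmonic difference $h_\varepsilon:=G^x_\Omega-G^x_{\Omega_\varepsilon}$. Because $x\in\Omega_\varepsilon$, the two Green functions share the same singularity at $x$, so $h_\varepsilon$ is harmonic on $\Omega_\varepsilon$, and on $\partial\Omega_\varepsilon$ it has boundary values $h_\varepsilon(\psi_\varepsilon(\xi))=G^x_\Omega(\psi_\varepsilon(\xi))=F(\xi,\varepsilon)=\varepsilon P_\Omega(x,\xi)+O(\varepsilon^2)$. The whole lemma then reduces to the single claim that
\[
\partial_t h_\varepsilon(\xi,\varepsilon)=o(1)\quad\text{uniformly in }\xi,
\]
for once this holds,
\[
P_{\Omega_\varepsilon}(x,\psi_\varepsilon(\xi))=\partial_t F_\varepsilon(\xi,\varepsilon)=\partial_t F(\xi,\varepsilon)-\partial_t h_\varepsilon(\xi,\varepsilon)=P_\Omega(x,\xi)+o(1),
\]
and dividing by $P_\Omega(x,\xi)$, which is bounded below by $1/C$ uniformly in $\xi$ by Lemma \ref{Poisson_estimate_lemma} (applicable since $x\in\Omega_\delta$ with $\delta=d(x,\partial\Omega)$), gives the asserted ratio $1+o(1)$.

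To prove the claim I would rescale. Set $\tilde h_\varepsilon:=h_\varepsilon/\varepsilon$, a harmonic function on $\Omega_\varepsilon$ whose boundary data $\tilde\phi_\varepsilon:=\varepsilon^{-1}G^x_\Omega(\cdot,\varepsilon)$ on $\partial\Omega_\varepsilon$ equals $\int_0^1 \partial_t G^x_\Omega(\cdot,s\varepsilon)\,ds$ and hence converges in $C^\infty$ to $P_\Omega(x,\cdot)$ as $\varepsilon\to0$; in particular $\{\tilde\phi_\varepsilon\}$ is bounded in $C^{2,\alpha}$ uniformly in $\varepsilon$. Being the $\varepsilon$-level sets of the distance function inside the tubular shell, the domains $\Omega_\varepsilon$ form a family of smooth domains converging to $\Omega$ with uniformly controlled geometry and ellipticity, so standard global Schauder estimates (Gilbarg--Trudinger) yield a bound $\|\tilde h_\varepsilon\|_{C^{1}(\overline{\Omega_\varepsilon})}\le K$ independent of $\varepsilon$. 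Consequently $|\partial_t h_\varepsilon(\xi,\varepsilon)|=\varepsilon\,|\partial_t\tilde h_\varepsilon(\xi,\varepsilon)|\le K\varepsilon$, which is the required uniform $o(1)$ (indeed $O(\varepsilon)$).

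The step I expect to be the main obstacle is precisely this last one. The maximum principle alone only gives $h_\varepsilon=O(\varepsilon)$ in sup norm, which tells us nothing about the normal derivative, and a harmonic function with uniformly small boundary values can in principle have a large normal derivative if the data oscillates. What rescues the argument is that the boundary data of $h_\varepsilon$ is $O(\varepsilon)$ not merely in sup norm but in $C^{2,\alpha}$, so after rescaling the data is uniformly smooth and boundary Schauder estimates convert this into a uniform gradient bound up to $\partial\Omega_\varepsilon$. Checking that the Schauder constants are genuinely uniform over the moving family $\{\Omega_\varepsilon\}$ --- equivalently, that $\partial\Omega_\varepsilon$ enjoys uniformly bounded curvature and a uniform interior/exterior ball condition for small $\varepsilon$ --- is the technical crux, and is exactly where the hypothesis that $\overline\Omega\setminus\overline{\Omega_\varepsilon}$ is a tubular neighbourhood of $\partial\Omega$ (together with $d(x,\partial\Omega_\varepsilon)>\tfrac12 d(x,\partial\Omega)$, keeping the singularity away) is indispensable.
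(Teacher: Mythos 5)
Your proof is correct, but it takes a genuinely different route from the paper's. The paper's proof is essentially a citation: it invokes the $C^1$-dependence of Green functions on their defining domains (It\^o, \cite[p. 311, (2.17)]{Ito}), together with the fact that $\psi_\varepsilon \to id_{\partial \Omega}$ in $C^\infty$, to conclude directly that $\left|P_{\Omega_\varepsilon}(x,\psi_{\varepsilon}(\xi)) - P_\Omega(x,\xi)\right| < \eta/C$ uniformly in $\xi$ for small $\varepsilon$; it then divides by $P_\Omega(x,\xi)$ and uses the uniform lower bound $P_\Omega(x,\xi) \ge 1/C$ from Lemma \ref{Poisson_estimate_lemma} --- which is exactly your final step as well. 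What you do differently is to prove the uniform convergence of the kernels from scratch rather than quote it: you observe that $h_\varepsilon = G^x_\Omega - G^x_{\Omega_\varepsilon}$ is harmonic in $\Omega_\varepsilon$ (the poles cancel), that its boundary data on $\partial \Omega_\varepsilon$ equals $\varepsilon P_\Omega(x,\cdot) + O(\varepsilon^2)$ because $G^x_\Omega$ vanishes on $\partial \Omega$ and is smooth up to the boundary away from $x$, and you then convert smallness of boundary data into smallness of the normal derivative by rescaling and applying Schauder estimates that are uniform over the family $\{\Omega_\varepsilon\}$. Your route buys a self-contained argument and a quantitative rate ($O(\varepsilon)$ rather than $o(1)$); its cost is precisely the point you flag as the crux --- uniformity of the Schauder constants on the moving domains --- which does hold here (the boundaries $\partial \Omega_\varepsilon$ are parallel hypersurfaces inside a fixed tubular neighborhood, hence of uniformly bounded $C^{2,\alpha}$ geometry; alternatively one can pull everything back to the fixed domain $\Omega$ by diffeomorphisms converging to the identity in $C^\infty$), but it is exactly the kind of domain-perturbation analysis that the paper outsources wholesale to It\^o's theorem. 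Your observation that the maximum principle alone is insufficient (sup-norm smallness of boundary data does not control a normal derivative) correctly identifies why some form of higher-order control, whether your Schauder bound or the cited $C^1$-dependence, cannot be avoided.
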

\begin{proof}
For $\delta=d(x,\partial \Omega)$, let $C\ge 1$, be as in the conclusion of Lemma \ref{Poisson_estimate_lemma}. Then by $C^1$-dependence of the Green functions on their defining domains (see for instance, \cite[p. 311, (2.17)]{Ito}) and the fact that $\psi_{\varepsilon} \to id_{\partial \Omega}$ in $C^\infty$ as $\varepsilon \to 0$, given $\eta >0$, there exists $\varepsilon_0 >0$, such that for all $\varepsilon \in (0,\varepsilon_0)$ and all $\xi \in \partial \Omega$, one has
\begin{equation} \label{Poisson_asymptotic_eq1}
\left|P_{\Omega_\varepsilon}(x,\psi_{\varepsilon}(\xi)) - P_\Omega(x,\xi) \right| < \frac{\eta}{C} \ .
\end{equation} 
Then combining (\ref{Poisson_asymptotic_eq1}) and (\ref{Poisson_estimate}), we get that given $\eta >0$, there exists $\varepsilon_0 >0$, such that for all $\varepsilon \in (0,\varepsilon_0)$ and all $\xi \in \partial \Omega$, 
\begin{equation*}
\left|\frac{P_{\Omega_\varepsilon}(x,\psi_{\varepsilon}(\xi))}{P_\Omega(x,\xi)} - 1\right| < \frac{\eta}{C P_\Omega(x,\xi)} \le \eta \ .
\end{equation*}
This completes the proof.
\end{proof}
Now we are in a position to prove Lemma \ref{max_princ}.
\begin{proof}[Proof of Lemma \ref{max_princ}]
Let $x \in \Omega$. Then for $\varepsilon>0$, sufficiently small, let $\Omega_{\varepsilon}:=\{y \in \Omega : d(y,\partial \Omega)> \varepsilon\}$ be a non-empty domain such that 
\begin{itemize}
\item $\overline{\Omega} \setminus \overline{\Omega_{\varepsilon}}$ is a tubular neighborhood of $\partial \Omega$,
\item $x \in \Omega_{\varepsilon}$ with $d(x,\partial \Omega_{\varepsilon}) > \frac{1}{2} d(x,\partial \Omega)$.
\end{itemize}
Let $\psi_{\varepsilon}$ denote the radial projection map from $\partial \Omega$ onto $\partial \Omega_{\varepsilon}$ given by,
\begin{eqnarray*}
\psi_{\varepsilon} : \partial \Omega &\to & \partial \Omega_{\varepsilon} \\
(\xi,0) &\mapsto & (\xi,\varepsilon) \:.
\end{eqnarray*}

Now by the sub-mean value property,
\begin{equation} \label{max_princ_eq1}
\varphi(x) \le \int_{\partial \Omega_{\varepsilon}} \varphi(\xi,\varepsilon)\:d\mu_{x,\Omega_{\varepsilon}}(\xi,\varepsilon)=\int_{\partial \Omega} \varphi(\psi_{\varepsilon}(\xi))\:\left(\frac{d(\psi^*_{\varepsilon} \mu_{x,\Omega_{\varepsilon}})}{d\mu_{x,\Omega}}\right)(\xi)\:d\mu_{x,\Omega}(\xi)\:.
\end{equation}
Then we note that
\begin{equation*}
\frac{d(\psi^*_{\varepsilon} \mu_{x,\Omega_{\varepsilon}})}{d\mu_{x,\Omega}}(\xi)=\left(\frac{P_{\Omega_\varepsilon}(x,\psi_{\varepsilon}(\xi))}{P_\Omega(x,\xi)}\right)\left(\frac{\psi^*_{\varepsilon}\left(d\Sigma_{\partial \Omega_\varepsilon}\right)}{d\Sigma_{\partial \Omega}}\right)(\xi)\:,
\end{equation*}
where  $\Sigma_{\partial \Omega}$ and $\Sigma_{\partial \Omega_\varepsilon}$ denote the normalized surface volume measures on $\partial \Omega$ and $\partial \Omega_{\varepsilon}$ respectively.

\medskip

Now we take an orthonormal basis $\{e_1,\dots,e_{n-1}\}$ of $T_{\xi}\partial \Omega$, with respect to the induced Riemannian metric on $\partial \Omega$. Then as $\psi_{\varepsilon} \to id_{\partial \Omega}$ in $C^\infty$ as $\varepsilon \to 0$, we have for all $1 \le i,j \le n-1$,
\begin{equation*}
\left\langle (d\psi_{\varepsilon})_\xi(e_i),(d\psi_{\varepsilon})_\xi(e_j)\right\rangle \to \langle e_i,e_j \rangle \text{ as } \varepsilon \to 0\:.
\end{equation*}
Hence,
\begin{equation*} 
\frac{(\psi^*_{\varepsilon}\left(d\Sigma_{\partial \Omega_\varepsilon}\right))_\xi(e_1,\dots,e_{n-1})}{(d\Sigma_{\partial \Omega})_\xi(e_1,\dots,e_{n-1})}=\frac{(d\Sigma_{\partial \Omega_\varepsilon})_{\psi_\varepsilon(\xi)} \left((d\psi_{\varepsilon})_\xi(e_1),\dots,(d\psi_{\varepsilon})_\xi(e_{n-1})\right)}{(d\Sigma_{\partial \Omega})_\xi(e_1,\dots,e_{n-1})} \to 1 \text{ as } \varepsilon \to 0\:.
\end{equation*}

\medskip

Combining the above with Lemma \ref{Poisson_asymptotic_lemma}, it follows that for all $\xi \in \partial \Omega$,
\begin{equation*}
\frac{d(\psi^*_{\varepsilon} \mu_{x,\Omega_{\varepsilon}})}{d\mu_{x,\Omega}}(\xi)  \to 1 \text{ as } \varepsilon \to 0\:.
\end{equation*}

\medskip

Then applying the Dominated Convergence Theorem with respect to $\mu_{x,\Omega}$ on $\partial \Omega$ in (\ref{max_princ_eq1}), the hypothesis on normal limits of $\varphi$ yields  
\begin{equation*}
\varphi(x) \le \int_{\partial \Omega} f(\xi)\:d\mu_{x,\Omega}(\xi)=P[f](x)\:.
\end{equation*}
\end{proof}

\section{Proofs of the main results} 
\begin{proof}[Proof of Theorem \ref{domain_nt}]
Let $\xi \in \partial \Omega$ and $z$ be a point in $\Omega$ with  
\begin{equation} \label{domain_nt_pf_eq1}
d(\xi,z) \le \frac{\{(1+\alpha)-(3+\alpha)\kappa\}}{(1+\kappa)} d(z,\partial \Omega)\:.
\end{equation}
We note that by the hypothesis on $\kappa$, one has
\begin{equation} \label{domain_nt_pf_eq}
1 <  \frac{\{(1+\alpha)-(3+\alpha)\kappa\}}{(1+\kappa)} \le 1+ \alpha \:.
\end{equation}

Now by Lemma \ref{props_of_v}, there exists a ball $B(x,r)$ such that 
\begin{itemize}
\item $z \in B(x,r)\:,$
\item $r \le \rho_0(x)=\kappa d(x,\partial \Omega)$ and 
\item the harmonic extension $h$ of $u$ on $B(x,r)$ satisfies $h(z)=v(z)$.
\end{itemize}
We claim that 
\begin{equation} \label{domain_nt_pf_eq2}
\overline{B(x,r)} \subset \Gamma_\alpha(\xi)\:.
\end{equation}
To establish the claim, we first note that 
\begin{equation} \label{domain_nt_pf_eq3}
d(\xi,x) \le \{(1+\alpha)-(2+\alpha)\kappa\}d(x,\partial \Omega)\:.
\end{equation}
By repeated application of the triangle inequality, (\ref{domain_nt_pf_eq1}) and the fact that $r \le \kappa d(x,\partial \Omega)$, this is seen as follows:
\begin{eqnarray*}
d(\xi,x) & \le & d(\xi,z)+d(z,x) \\
         & \le & \frac{\{(1+\alpha)-(3+\alpha)\kappa\}}{(1+\kappa)}\: d(z,\partial \Omega) + \kappa d(x, \partial \Omega) \\
         & \le & \frac{\{(1+\alpha)-(3+\alpha)\kappa\}}{(1+\kappa)} \left(d(x,\partial \Omega)+d(z,x)\right) + \kappa d(x, \partial \Omega) \\
         & \le & \left[\frac{\{(1+\alpha)-(3+\alpha)\kappa\}}{(1+\kappa)} + \frac{\{(1+\alpha)-(3+\alpha)\kappa\}}{(1+\kappa)}\kappa + \kappa\right] d(x,\partial \Omega) \\
         & = & \{(1+\alpha)-(2+\alpha)\kappa\} d(x,\partial \Omega)\:.
\end{eqnarray*}
Then for any $y \in \overline{B(x,r)}$, by (\ref{domain_nt_pf_eq3}) and the fact that $r \le \kappa d(x,\partial \Omega)$, we get
\begin{eqnarray*}
d(\xi,y) & \le & d(\xi,x)+ r \\
         & \le & \{(1+\alpha)-(2+\alpha)\kappa\} d(x,\partial \Omega) + \kappa d(x,\partial \Omega) \\
         & = & (1+\alpha)(1-\kappa)d(x,\partial \Omega) \\
         & \le & (1+\alpha)\{d(x,\partial \Omega) - r\} \\
         & \le & (1+\alpha)\{d(x,\partial \Omega) - d(x,y)\} \\
         & \le & (1+\alpha)\:d(y,\partial \Omega) \:.
\end{eqnarray*}
This proves the claim (\ref{domain_nt_pf_eq2}). 

\medskip

Now let $\xi \in \partial \Omega$ be such that \begin{equation*}  
\displaystyle\lim_{z \to \xi} u(z) = u_\xi \:\text{exists} \:,
\end{equation*}
whenever $z$ approaches to $\xi$ through the non-tangential cone $\Gamma_\alpha(\xi)$. Let $\{z_n\}_{n=1}^\infty$ be a sequence of points in $\Omega$ that converges normally to $\xi$ (as in definition \ref{normal_convergence}). By Lemma \ref{props_of_v}, one has balls $B(x_n,r_n)$ such that 
\begin{itemize}
\item $z_n \in B(x_n,r_n)\:,$
\item $r_n \le \rho_0(x_n)=\kappa d(x_n,\partial \Omega)$ and 
\item the harmonic extension $h_n$ of $u$ on $B(x_n,r_n)$ satisfies $h_n(z_n)=v(z_n)$.
\end{itemize}
Then by (\ref{domain_nt_pf_eq1}), (\ref{domain_nt_pf_eq}) and (\ref{domain_nt_pf_eq2}), it follows that for all $n$ sufficiently large,
\begin{equation*} 
\overline{B(x_n,r_n)} \subset \Gamma_\alpha(\xi)\:.
\end{equation*}
Thus,
\begin{eqnarray} \label{proving_limit_of_v}
|v(z_n)-u_\xi| & \le & \int_{\partial B(x_n,r_n)} |u(y)-u_\xi|\:d\mu_{z_n,B(x_n,r_n)}(y) \nonumber \\
& \le & \displaystyle\sup_{y \in \overline{B(x_n,r_n)}} |u(y)-u_\xi| \to 0 \text{ as } n \to \infty \:.
\end{eqnarray}
Hence,
\begin{equation*}
\displaystyle\lim_{z \to \xi} v(z) = u_\xi \:,
\end{equation*}
whenever $z$ converges normally to $\xi$, for almost every boundary point $\xi$, with respect to the Riemannian measure on $\partial \Omega$ and thus also with respect to the harmonic measures on $\partial \Omega$. 

\medskip

Using remark \ref{props_of_w} and proceeding as above, one also gets that
\begin{equation*}
\displaystyle\lim_{z \to \xi} w(z) = u_\xi \:,
\end{equation*}
whenever $z$ converges normally to $\xi$, for almost every boundary point $\xi$, with respect to the harmonic measures on $\partial \Omega$. Combining the above with Lemma \ref{props_of_v} and remark \ref{props_of_w}, we see that $\varphi:=v-w$ is a non-negative, bounded, continuous, subharmonic function so that
\begin{equation*}
\displaystyle\lim_{z \to \xi} \varphi(z) = 0 \:,
\end{equation*}
whenever $z$ converges normally to $\xi$, for almost every boundary point $\xi$, with respect to the harmonic measures on $\partial \Omega$. An application of Lemma \ref{max_princ} then implies that $\varphi \equiv 0$, that is, $u \equiv v \equiv w$. So $u$ is both subharmonic as well as superharmonic, and hence $u$ is harmonic.
\end{proof}

\begin{proof}[Proof of Theorem \ref{harmonic_manifold_nt}]
Let $\xi \in \partial X$ be such that 
\begin{equation*}
\displaystyle\lim_{z \to \xi} u(z) = u_\xi \:\text{exists}\:,
\end{equation*}
whenever $z$ approaches to $\xi$ through the non-tangential cone $T_C(\xi)$. Let $\{z_n\}_{n=1}^\infty$ be a sequence of points lying on the geodesic ray $\gamma_\xi$, such that $z_n \to \xi$ as $n \to \infty$.

\medskip

Then by Lemma \ref{props_of_v}, there exist balls $B(x_n,r_n)$ such that 
\begin{itemize}
\item $z_n \in B(x_n,r_n)\:,$
\item $r_n \le \rho_0(x_n)\equiv C/2$, and 
\item the harmonic extension $h_n$ of $u$ on $B(x_n,r_n)$ satisfies $h_n(z_n)=v(z_n)$.
\end{itemize}
Next we note that for all $n \in \N$,
\begin{equation*}
\overline{B(x_n,r_n)} \subset T_C(\xi)\:.
\end{equation*}
This is seen as follows. For $y \in \overline{B(x_n,r_n)}$,
\begin{equation*}
d(y,\gamma_\xi) \le d(y,z_n) \le d(y,x_n)+d(x_n,z_n) \le 2r_n \le C\:.
\end{equation*}

Then by proceeding as in the proof of Theorem \ref{domain_nt}, we see that $\varphi:=v-w$ is a non-negative, bounded, continuous, subharmonic function so that
\begin{equation} \label{harmonic_manifold_nt_pf_eq1}
\displaystyle\lim_{z \to \xi} \varphi(z) = 0 \:,
\end{equation}
whenever $z$ approaches to $\xi$ along the geodesic ray $\gamma_\xi$, for almost every boundary point $\xi$, with respect to the visibility measure $\lambda_o$ on $\partial X$.

\medskip

Now boundedness of $\varphi$ implies that $\varphi$ has a harmonic majorant. Then Lemma \ref{least_harmonic_majorant} yields that $\varphi$ has a unique non-negative least harmonic majorant $F_\varphi$. The boundedness of $\varphi$ then implies that the Dominated Convergence Theorem is applicable and which in turn yields that
\begin{eqnarray*}
F_\varphi(o)&=& \displaystyle\lim_{r \to \infty} \int_{T^1_o X} \varphi(\gamma_{o,v}(r))\:d\theta_o(v) \\
&=&\displaystyle \int_{T^1_o X} \left(\lim_{r \to \infty} \varphi(\gamma_{o,v}(r))\right)\:d\theta_o(v) \\
&=&0\:.
\end{eqnarray*} 
The last line follows from  (\ref{harmonic_manifold_nt_pf_eq1}) and the fact that the visibility measure $\lambda_o$ is the push-forward of $\theta_o$ (on $T^1_o X$) to the boundary $\partial X$ under the radial projection. 

\medskip

Then by the maximum principle, $F_\varphi \equiv 0$. Hence,
\begin{equation*}
0 \le \varphi \le F_\varphi \equiv 0 \:.
\end{equation*}
Thus $\varphi \equiv 0$ and this completes the proof.  
\end{proof}

\section*{Acknowledgements} The author would  like to thank Prof. Kingshook Biswas for many useful discussions. The author is thankful to him and Prof. Swagato K. Ray for suggesting the problems. The author is supported by a Research Fellowship of Indian Statistical Institute. 

\bibliographystyle{amsplain}

\begin{thebibliography}{amsplain}
\bibitem[Ai90]{Aikawa} Aikawa H. {\em Harmonic functions having no tangential limits}. Proc. Amer. Math. Soc. 108 (1990), no. 2, 457-464.
\bibitem[BD23]{BD} Biswas K. and Dewan U. {\em Restricted Mean Value Property in Riemannian Manifolds}. arXiv:2307.05215 (2023).
\bibitem[BH99]{Bridson} Bridson M. R. and Haefliger A. {\em Metric spaces of nonpositive curvature}. Grundlehren der mathematischen Wissenschaften, ISSN 0072-7830; 319, 1999. 
\bibitem[De23]{D} Dewan U. {\em Boundary exceptional sets for radial limits of superharmonic functions on non-positively curved harmonic manifolds of purely exponential volume growth}. arXiv:2309.05661 (2023).
\bibitem[EO73]{Eberlein} Eberlein P. and O’Neill B. {\em Visibility manifolds}. Pac. J. Math.  46, 45–109 (1973).
\bibitem[Fa06]{Fatou} Fatou P. {\em S\'eries trigonom\'etriques et s\'eries de Taylor}. Acta Math. 30, 335-400 (1906).
\bibitem[Fe76]{Fenton} Fenton P.C. {\em Functions having the Restricted Mean Value Property}. J. Lond. Math. Soc., Volume s2-14, Issue 3, December 1976, pp 451-458.
\bibitem[Fe79]{Fenton2} Fenton P.C. {\em On Sufficient Conditions for Harmonicty}. Trans. Amer. Math. Soc., Volume 253, September 1979, pp 139-147.
\bibitem[Fe87]{Fenton3} Fenton P.C. {\em On the Restricted Mean Value Property}. Proc. Amer. Math. Soc.,Volume 100, Number 3, July 1987, pp 477-481.
\bibitem[HN93]{HN1} Hansen W. and Nadirashvili N. {\em A converse to the mean value theorem for harmonic functions}. Acta Math., 171 (1993), pp 139-163.
\bibitem[HN94]{HN2}Hansen W. and Nadirashvili N. {\em Littlewood’s One Circle Problem}. J.  Lond. Math. Soc., 50(2), (1994), 349–360.
\bibitem[It64]{Ito} It$\hat{o}$ S. {\em Martin boundary for linear elliptic differential operators of second order in a manifold}. J. Math. Soc. Japan, Vol. 16, no. 4, 1964.
\bibitem[Ke34]{Kellogg} Kellogg O.D. {\em Converses of Gauss’ theorem on the arithmetic mean}. Trans. Amer. Math. Soc., 36, 1934, pp 227-242.
\bibitem[KKPT]{Kenig} Kenig C., Koch H., Pipher J. and Toro T. {\em A new approach to absolute continuity of elliptic measure, with applications to non-symmetric equations}. Adv. Math. 153, 231-298 (2000).
\bibitem[Kn12]{Kn12} Knieper G. {\em New results on noncompact harmonic manifolds}. Comment. Math. Helv.  87, 669–703 (2012).
\bibitem[KP16]{KP16} Knieper G. and Peyerimhoff N. {\em Harmonic functions on rank one asymptotically harmonic manifolds}. J. Geom. Anal.  26(2), 750–781 (2016).
\bibitem[Li27]{Littlewood1} Littlewood J. E. {\em On a theorem of Fatou}. J. Lond. Math. Soc. 2 (1927), 172-176.
\bibitem[Li68]{Littlewood} Littlewood J. E. {\em Some problems in real and complex analysis}. Hath. Math. Monographs, Massachusetts, 1968.
\bibitem[Wi50]{Willmore} Willmore T. J. {\em Mean Value Theorems in Harmonic Riemannian Spaces}. J. Lond. Math. Soc., Volume s1-25, Issue 1, January 1950, pp 54–57.

\end{thebibliography}

\end{document}